\newtheorem{theorem}{Theorem}
\newtheorem{lemma}{Lemma}
\newtheorem{remark}{Remark}
\renewcommand{\epsilon}{\varepsilon}
\DeclareMathOperator{\Ima}{Im}
\DeclareMathOperator{\Ker}{Ker}
\DeclareMathOperator{\esssup}{esssup}
\def\Id{\text{\rm Id}}
\def\cA{\EuScript{A}}
\def\N{\mathbb{N}}
\def\Z{\mathbb{Z}}
\def\R{\mathbb{R}}
\begin{document}

\title[On the linearization of random dynamics] {On the linearization of infinite-dimensional random dynamical systems}

\author{Lucas Backes}
\address{\noindent Departamento de Matem\'atica, Universidade Federal do Rio Grande do Sul, Av. Bento Gon\c{c}alves 9500, CEP 91509-900, Porto Alegre, RS, Brazil.}
\email{lucas.backes@ufrgs.br}

\author{Davor Dragi\v cevi\' c}
\address{Department of Mathematics, University of Rijeka, Croatia}
\email[Davor Dragi\v cevi\' c]{ddragicevic@math.uniri.hr}

\keywords{random dynamical systems; linearization; H\"older continuity}
\subjclass[2020]{37H15}

\maketitle

  \begin{center}
\textit{Dedicated to the memory of Vlado Dragi\v cevi\' c }
\end{center}
  
\begin{abstract}
We present a new version of the Grobman-Hartman's linearization theorem for random dynamics. Our result holds for infinite dimensional systems whose linear part is not necessarily invertible. In addition, by adding some restrictions on the non-linear perturbations, we don't require for the linear part to be nonuniformly hyperbolic in the sense of Pesin but rather (besides requiring the existence of stable and unstable directions) allow for the existence of a third (central) direction on which we don't prescribe any behaviour for the dynamics. Moreover, under some additional nonuniform growth condition, we prove that the conjugacies given by the linearization procedure are H\"older continuous when restricted to bounded subsets of the space.

\end{abstract}

\section{Introduction}
The classical Grobman-Hartman theorem represents one of the cornerstones of the modern dynamical systems theory. It asserts that if $x_0$ is a hyperbolic fixed point of a $C^1$-diffeomorphism $f\colon \R^n  \to \R^n$, then $f$ is on a neighborhood of $x_0$ topologically conjugated to $Df(x_0)$.
The original references for this results are the papers of Grobman~\cite{G} and Hartman~\cite{H}. This result was extended to the case of Banach spaces independently by Palis~\cite{Palis} and Pugh~\cite{Pugh}, who also  simplified the original arguments of Grobman and
Hartman. It is well-known that even if $f$ is $C^\infty$, the conjugacy can fail to be even locally Lipschitz. Indeed, it turns out that in general the conjugacy is only  locally H\"older continuous~\cite{SX}. The first versions of the Grobman-Hartman theorem for nonautonomous dynamics were established by 
Palmer~\cite{Palmer}  in the case of continuous time  and by Aulbach and Wanner~\cite{AW} in the case of discrete time. 

In the context of random dynamical systems, the first version of the Grobman-Hartman theorem was proved by Wanner~\cite{Wanner}. It represents an important contribution to the study of the qualitative behaviour of random dynamical systems  which are used as models of many phenomena with some degree of noise
(including stochastic differential equations). For a detailed exposition of the theory of random dynamical systems we refer to~\cite{Arnold}. For certain extensions of the original approach developed by Wanner, we refer to~\cite{BDV, CMR, CR, ZS} and references therein. 
In~\cite{BV}, Barreira and Valls formulated sufficient conditions under which the conjugacies in the random version of the Grobman-Hartman theorem are locally H\"older continuous. The higher regularity of conjugacies was discussed in the important works of Li and Lu~\cite{LL1} and 
Lu, Zhang and Zhang~\cite{LZZ}.

The goal of the present paper is to make further contributions to the linearization theory of random dynamical systems. In sharp contrast to the existing results in the literature, our first main result (see Theorem~\ref{theo: hg}) gives a random version of the Grobman-Hartman theorem such that the linear part of our dynamics
is not necessarily invertible (although the associated perturbed dynamics needs to be invertible). In addition, we don't require for the linear part to be nonuniformly hyperbolic in the sense of Pesin but rather (besides requiring the existence of stable and unstable directions) allow for the existence of a third (central) direction on which we don't prescribe any behaviour for the linear part.
On the other hand, we stress that  our result doesn't allow any type of perturbations along this central direction. Consequently, to the best of our knowledge, our Theorem~\ref{theo: hg} represents the first version of the Grobman-Hartman theorem in the setting when the linear part of the dynamics can have zero Lyapunov exponents (assuming that a suitable version of the multiplicative ergodic theorem can be applied to it).

In our second main result (see Theorem~\ref{HL}),  we formulate sufficient conditions  under which the conjugacies given by Theorem~\ref{theo: hg} are H\"older continuous. When compared with the main result in~\cite{BV}, our Theorem~\ref{HL} is more general and sharper since for instance, we deal with the infinite-dimensional dynamics where a multiplicative ergodic theorem is not always available and, moreover, we don't require for the linear part to be hyperbolic.

Our techniques build on our recent work with K. Palmer~\cite{BDP} (see also~\cite{BD1}), where we obtained similar results in the context of nonautonomous dynamics. However, we stress that the results in the present paper require a nontrivial adjustments of the ideas from~\cite{BDP} that now need to be combined with 
certain ergodic theory tools.

\section{Linearization of random dynamical systems}
Let $X=(X, |\cdot |)$ be an arbitrary Banach space and let $\mathcal B(X)$ denote the space of all bounded linear operators on $X$. The norm on $\mathcal B(X)$ will be denoted by $\lVert \cdot \rVert$.
 Furthermore, let $(\Omega, \mathcal F, \mathbb P)$ be a probability space and let $\sigma \colon \Omega \to \Omega$ be an invertible $\mathbb P$-preserving measurable transformation. For the sake of simplicity of the presentation we will assume that $\mathbb P$ is ergodic (see Remark \ref{rem: non ergodic} for the discussion regarding the general case).

We recall that a measurable map $K\colon \Omega \to (0, \infty)$ is said to be \emph{tempered} if:
\begin{equation}\label{eq: def tempered}
\lim_{n\to \pm \infty}\frac 1 n \log K(\sigma^n (\omega))=0, \quad \text{for $\mathbb P$-a.e. $\omega \in \Omega$.}
\end{equation}
Let $A\colon \Omega \to \mathcal B(X)$ be a strongly measurable map. We recall that this means that the map $\omega \mapsto A(\omega)x$ is measurable for each $x\in X$. Let $\cA \colon \Omega \times \{0,1, 2, \ldots\}\to \mathcal B(X)$ be the \emph{linear cocycle} generated by $A$. We recall that $\cA$ is given by
\[
\cA(\omega, n)=\begin{cases}
\Id & \text{if $n=0$;}\\
A(\sigma^{n-1}(\omega))\cdots A(\omega) & \text{if $n>0$,}
\end{cases}
\]
for $\omega \in \Omega$ and $n\in \{0, 1, \ldots \}$. Here, $\Id$ denotes the identity operator on $X$. 

We now introduce some assumptions related to $\cA$. Namely, we will assume the following:
\begin{itemize}
\item there exist three families of projections $(\Pi^i(\omega))_{\omega \in \Omega}$, $i\in \{1, 2, 3\}$ on $X$ such that 
\[
\Id=\Pi^1(\omega)+\Pi^2(\omega)+\Pi^3(\omega), \quad \text{for $\mathbb P$-a.e. $\omega \in \Omega$.}
\]
In addition, for each $i\in \{1, 2, 3\}$, the map $\omega \to \Pi^i (\omega)$ is strongly measurable;
\item for $i\in \{1, 2, 3\}$ and $\mathbb P$-a.e. $\omega \in \Omega$,
\begin{equation}\label{eq: invariance projections}
A(\omega)\Pi^i(\omega)=\Pi^i (\sigma (\omega))A(\omega);
\end{equation}
\item for $i\in \{2,3\}$ and for $\mathbb P$-a.e. $\omega \in \Omega$,
\begin{equation}\label{inv}
A(\omega)\rvert_{\Ima \Pi^i (\omega)} \colon \Ima \Pi^i (\omega)\to \Ima \Pi^i (\sigma (\omega)) \quad \text{is invertible;}
\end{equation}
\item there exist $\lambda>0$ and a tempered random variable $K\colon \Omega \to [1, \infty)$ such that for $\mathbb P$-a.e. $\omega \in \Omega$ and $n\ge 0$
\begin{equation}\label{d1}
\lVert \cA(\omega, n)\Pi^1(\omega)\rVert \le K(\omega)e^{-\lambda n}
\end{equation}
and 
\begin{equation}\label{d2}
\lVert \cA(\omega, -n)\Pi^3(\omega)\rVert \le K(\omega)e^{-\lambda n},
\end{equation}
where 
\[
\cA(\omega, -n):=\bigg{(}\cA(\sigma^{-n}(\omega), n)\rvert_{\Ima \Pi^3(\sigma^{-n}(\omega))} \bigg{)}^{-1} \colon \Ima \Pi^3(\omega) \to \Ima  \Pi^3(\sigma^{-n}(\omega)).
\]
\end{itemize}

\begin{remark}
We would like to point out that the above assumptions are natural in the context of the ergodic theory.  Let $X=\R^d$ and assume that 
\[
\int_\Omega \log^+ \|A(\omega) \|\, d\mathbb P(\omega) <+\infty. 
\]
It follows from the version of the multplicative ergodic theorem (MET) established in~\cite{FLQ} that there exist numbers 
\[
-\infty\le \lambda_1 <\lambda_2 <\ldots <\lambda_k 
\]
and for $\mathbb P$-a.e. $\omega \in \Omega$ a measurable decomposition 
\[
X=E_1(\omega) \oplus E_2(\omega) \oplus \ldots \oplus E_k(\omega)
\]
such that 
\[
A(\omega)E_i (\omega) \subset E_i(\sigma (\omega)) \text{(with equality if $\lambda_i >-\infty$)}
\]
and 
\[
\lim_{n\to \infty} \frac 1 n \log |\cA(\omega, n)v |=\lambda_i, \quad \text{for $v\in E_i(\omega)$, $v\neq 0$ and $i\in \{1, \ldots ,k\}$.}
\]
Set 
\[
E^s(\omega):=\bigoplus_{i; \lambda_i <0} E_i(\omega), \quad E^u:=\bigoplus_{i; \lambda_i >0} E_i(\omega),
\]
and in the case when $\lambda_j=0$ for some $j$, let $E^c(\omega):=E_j (\omega)$ (if such $j$ doesn't exist, set $E^c(\omega)=\{0\}$). 
\begin{equation}\label{DEC}
E^s(\omega) \oplus E^c(\omega) \oplus E^u(\omega)= X, \quad \text{for $\mathbb P$-a.e. $\omega \in \Omega$.}
\end{equation}
Let $\Pi^1(\omega) \colon X\to E^s(\omega)$, $\Pi^2(\omega) \colon X\to E^c(\omega)$ and $\Pi^3(\omega) \colon X\to E^u(\omega)$ be projections associated to~\eqref{DEC}. Clearly, \eqref{eq: invariance projections} and~\eqref{inv} holds. Finally, it follows from~\cite[Theorem 2]{DF}
 that there exist $\lambda >0$ and a tempered random variable $K\colon \Omega \to [1, +\infty)$ such that~\eqref{d1} and~\eqref{d2} holds. 

We remark that under some additional assumptions an analogous statement can be formulated in the infinite-dimensional setting (by using~\cite[Proposition 3.7.]{BD})  relying on the infinite-dimensional versions of MET (see~\cite{B, GTQ, LL} and references therein).
\end{remark}

Furthermore, we assume that $(f_\omega)_{\omega \in \Omega}$ and $(g_\omega)_{\omega \in \Omega}$ are two families of continuous maps $f_\omega \colon X\to X$, $g_\omega \colon X\to X$ such that:
\begin{itemize}
\item for every $x\in X$ and $\mathbb P$-a.e. $\omega \in \Omega$, $f_\omega(x), g_\omega (x)\in E^{s,u}(\omega):=\Ker \Pi^2(\omega)$;
\item for every $x\in X$, $\omega \mapsto f_\omega(x)$ and $\omega \mapsto g_\omega(x)$ are measurable;
\item there exists a tempered measurable map $D\colon \Omega \to [1, \infty)$ such that  
\begin{equation}\label{eq: f and g bounded}
\max \{||f_\omega||_\infty, ||g_\omega||_\infty\}\le D(\omega),
\end{equation}
where for any $h\colon X\to X$, we set
\[
\lVert h\rVert_\infty:=\sup_{x\in X} |h(x)|;
\]

\item there exist $c>0$ such that for $\mathbb P$-a.e. $\omega \in \Omega$ and $x, y\in X$,
\begin{equation}\label{l1}
|f_\omega(x)-f_\omega(y)|\le \frac{c}{K(\sigma (\omega))}|x-y|
\end{equation}
and
\begin{equation}\label{l2}
|g_\omega(x)-g_\omega(y)|\le \frac{c}{K(\sigma (\omega))}|x-y|;
\end{equation}
\item for $\mathbb P$-a.e. $\omega \in \Omega$ and $y\in E^c(\omega):=\Ima \Pi^2(\omega)$, 
\begin{equation}\label{eq: A+f invert}
x\mapsto A(\omega)x+f_\omega(x+y)
\end{equation}
and 
\[
x\mapsto A(\omega)x+g_\omega(x+y)
\]
are invertible maps from $E^{s,u}(\omega)$ onto $E^{s,u}(\sigma (\omega))$.
\end{itemize}
\begin{remark}
We observe that if $A(\omega)$ is invertible and $\|A(\omega)^{-1}\| \cdot \text{Lip}(f_\omega)<1$ where $\text{Lip}(f_\omega)$ stands for the Lipschitz constant of $f_\omega$, then $A(\omega)+f_\omega$ is a homeomorphism. Consequently, if $A(\omega)$ is invertible for $\mathbb{P}$-a.e. $\omega\in \Omega$, $a=\esssup_{\omega\in \Omega}\|A(\omega)^{-1}\|<+\infty$ and $ac < 1$, then $A(\omega)+f_\omega$ is a  homeomorphism for $\mathbb{P}$-a.e $\omega\in \Omega$ and, in particular, the last hypothesis above is satisfied (see \cite[p. 433]{BD19}).
\end{remark}

\begin{remark}
\begin{enumerate}
\item In the case when $K$ is a constant random variable, we note that the requirements~\eqref{l1} and~\eqref{l2} mean that $f_\omega$ and $g_\omega$ are Lipschitz with a uniform (independent on $\omega$) Lipschitz constant. We stress that even in this setting, our Theorems~\ref{theo: hg} and~\ref{HL} are new results.

\item Let us now describe a procedure for the construction of maps $f_\omega$ satisfying~\eqref{l1} in the general case. The discussion is essentially taken from~\cite[Section 5.2]{BD2} but we include for the sake of completeness.
 Take an arbitrary $\rho >0$. By~\cite[Proposition 4.3.3]{Arnold}, there exists a random variable $D\colon \Omega \to (0, +\infty)$ such that 
\begin{equation}\label{1706}
K(\omega)\le D(\omega) \quad \text{and} \quad D(\sigma^n (\omega))\le D(\omega)e^{\rho |n|},
\end{equation}
for $\mathbb P$-a.e. $\omega \in \Omega$ and $n\in \Z$. Choose $T>0$ sufficiently large so that $\mathbb P(A)>0$, where
\[
A:=\{ \omega \in \Omega: D(\omega)\le T\}.
\]
For $n\in \N$, set 
\[
A_n:=\sigma^{-n}(A)\setminus \cup_{k=0}^{n-1}\sigma^{-k}(A).
\]
Letting $A_0=A$, we observe that $A_n\cap A_m=\emptyset$ for $n\neq m$. Moreover, due to the ergodicity of $(\Omega, \mathcal F, \mathbb P, \sigma)$, we have that $\mathbb P(\cup_{n=0}^\infty A_n)=1$. For $n\ge 0$ and $\omega \in A_n$, let $f_\omega$ be such that
\begin{equation}\label{1718}
|f_\omega(x)-f_\omega (y)| \le \frac{c}{T}e^{-\rho |n-1|}|x-y|, \quad \text{for $x, y\in X$.}
\end{equation}
We now claim that~\eqref{l1} holds. Take $\omega \in A_n$ for $n\ge 1$. Choose $\omega'\in A$ such that $\omega=\sigma^{-n} (\omega')$. By~\eqref{1706}, we have that
\[
K(\sigma (\omega)) \le D(\sigma (\omega)) =D(\sigma^{-(n-1)} (\omega'))\le e^{\rho |n-1|}D(\omega')\le Te^{\rho |n-1|},
\]
and thus 
\[
\frac c Te^{-\rho |n-1|} \le \frac{c}{K(\sigma (\omega))}.
\]
We conclude that~\eqref{l1} follows from~\eqref{1718}. One can argue in the same manner for $\omega \in A$.
\end{enumerate}
\end{remark}

\begin{remark}
We note that the condition in the spirit of~\eqref{l1} appeared in some of the earlier versions of the Grobman-Hartman theorem for random dynamical systems (see for example~\cite[eq.(21)]{BDV}). Furthermore, it is analogous to the condition (see~\cite[eq.(2.2)]{ZLZ}) imposed in the roughness theorem for tempered exponential dichotomies established in~\cite{ZLZ}.
\end{remark}

We are now in a position to formulate our first result.
\begin{theorem}\label{theo: hg}
For $c>0$ sufficiently small, there exists a family of homeomorphisms $(H_\omega)_{\omega \in \Omega}$, $H_\omega \colon X\to X$ such that:
\begin{itemize}
\item for $\mathbb P$-a.e. $\omega \in \Omega$,
\begin{equation}\label{1lin}
H_{\sigma(\omega)}\circ (A(\omega)+f_\omega)=(A(\omega)+g_\omega)\circ H_\omega;
\end{equation}
\item for $x\in X$, $\omega \mapsto H_\omega(x)$ is measurable;
\item there exists a tempered random variable $T\colon \Omega \to (0, \infty)$ such that
\[
\lVert H_\omega-\Id\rVert_\infty \le T(\omega), \quad \text{for $\mathbb P$-a.e. $\omega \in \Omega$;}
\]
\item $H_{\omega}(x)-x \in E^{s,u}(\omega) $ for $\mathbb P$-a.e. $\omega \in \Omega$ and every $x\in X$.
\end{itemize}
Moreover, the family $(H_\omega)_{\omega \in \Omega}$ satisfying the previous properties is essentially unique.  More precisely, if $(H_\omega')_{\omega \in \Omega}$ is another family that satisfies these properties, then $H_\omega=H_\omega'$ for $\mathbb P$-a.e. $\omega \in \Omega$.
\end{theorem}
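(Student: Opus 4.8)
\emph{Strategy.} The plan is to look for $H_\omega$ of the form $H_\omega=\Id+h_\omega$ with $h_\omega(x)\in E^{s,u}(\omega)$ for every $x$, and to obtain the family $(h_\omega)_{\omega\in\Omega}$ as the unique fixed point of a suitable contraction. Since $f_\omega$ and $g_\omega$ take values in the stable--unstable direction and $A$ intertwines the projections $\Pi^i$, substituting $H_\omega=\Id+h_\omega$ into~\eqref{1lin} shows that the $E^c(\omega)$--components of both sides automatically coincide (both equal $A(\omega)\Pi^2(\omega)x$), so that~\eqref{1lin} is equivalent to the functional equation
\[
h_{\sigma(\omega)}\bigl((A(\omega)+f_\omega)(x)\bigr)-A(\omega)h_\omega(x)=g_\omega\bigl(x+h_\omega(x)\bigr)-f_\omega(x).
\]
Writing $F_\omega:=A(\omega)+f_\omega$, the invertibility hypotheses~\eqref{inv} and~\eqref{eq: A+f invert} imply that each $F_\omega$ is a homeomorphism of $X$, hence the nonlinear cocycle $\mathcal F(\omega,n)$ it generates is defined for all $n\in\Z$. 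Applying $\Pi^1(\sigma(\omega))$ and $\Pi^3(\sigma(\omega))$ to the functional equation and solving the two resulting scalar-type difference equations along the $\mathcal F$--orbit of $x$ — summing backwards in the stable direction (using~\eqref{d1}) and forwards in the unstable direction (using~\eqref{d2}), the boundary terms being dropped because $h$ will be bounded by a tempered variable — recasts the problem as a fixed point equation $h=\mathcal L h$, where $(\mathcal L h)_\omega(x)$ is the sum of two series whose $j$--th terms combine the dichotomy bound of~\eqref{d1}--\eqref{d2} at $\sigma^{\mp j}(\omega)$ with the value of $g_{\sigma^{\mp j}(\omega)}(\,\cdot+h(\cdot)\,)-f_{\sigma^{\mp j}(\omega)}$ at $\mathcal F(\omega,\mp j)x$.

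\emph{The fixed point.} The heart of the proof is to find a complete metric space of random functions on which $\mathcal L$ is a contraction, and this is where the nonuniformity of the hyperbolicity makes the argument delicate. Using~\eqref{eq: f and g bounded} together with~\eqref{d1}--\eqref{d2}, the series defining $\mathcal L h$ converge and $\|(\mathcal L h)_\omega\|_\infty\le T(\omega)$, where $T(\omega)$ is a sum of the tempered quantities $K(\sigma^{\mp j}(\omega))D(\sigma^{\mp j}(\omega))$ weighted by $e^{-\lambda|j|}$; a routine estimate shows $T$ is tempered. I would then fix, via~\cite[Proposition 4.3.3]{Arnold}, a tempered $W\colon\Omega\to[1,\infty)$ with $W\ge T$ and $W(\sigma^{\pm1}(\omega))\le e^{\epsilon}W(\omega)$ for a small $\epsilon\in(0,\lambda)$, and work in the space $\mathcal X$ of measurable families $h$ with each $h_\omega\colon X\to X$ continuous, $h_\omega(x)\in E^{s,u}(\omega)$ and $\|h_\omega\|_\infty\le W(\omega)$ for $\mathbb P$--a.e.\ $\omega$, metrized by $d(h,\bar h)=\esssup_{\omega}W(\omega)^{-1}\sup_x|h_\omega(x)-\bar h_\omega(x)|$. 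Then $\mathcal L(\mathcal X)\subseteq\mathcal X$, $(\mathcal X,d)$ is complete, and — this is the key cancellation — when one subtracts the $j$--th terms of $\mathcal L h$ and $\mathcal L\bar h$, the factor $K(\sigma^{-j+1}(\omega))$ coming from~\eqref{d1} is exactly cancelled by the Lipschitz constant $c/K(\sigma^{-j+1}(\omega))$ of $g_{\sigma^{-j}(\omega)}$ supplied by~\eqref{l1} (and similarly on the unstable side via~\eqref{l2}), leaving $d(\mathcal L h,\mathcal L\bar h)\le c\,C_{\lambda,\epsilon}\,d(h,\bar h)$, where $C_{\lambda,\epsilon}<\infty$ is governed by the geometric series $\sum_{j\ge0}e^{-(\lambda-\epsilon)j}$ and is \emph{non-random} thanks to the slow growth of $W$. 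Hence $\mathcal L$ is a contraction once $c<1/C_{\lambda,\epsilon}$, and the Banach fixed point theorem produces a unique $h\in\mathcal X$. The two competing demands — completeness, which forces the weight $W$, and an $\omega$--independent contraction ratio, which forces $W$ to be slowly varying — are the main obstacle, and the hypotheses~\eqref{l1}--\eqref{l2} with their precise $1/K(\sigma(\cdot))$ weighting are exactly what reconcile them.

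\emph{Homeomorphism property.} Put $H_\omega=\Id+h_\omega$. The functional equation yields~\eqref{1lin}, the containment $h_\omega(x)\in E^{s,u}(\omega)$ yields $H_\omega(x)-x\in E^{s,u}(\omega)$, and since $h=\mathcal L h$ lies in the $T$--ball we get $\lVert H_\omega-\Id\rVert_\infty\le T(\omega)$ with $T$ tempered; measurability of $\omega\mapsto H_\omega(x)$ and continuity of $H_\omega$ follow because $\mathcal L$ preserves both properties and its iterates applied to the zero family converge pointwise to $h$ on a full-measure set. For the homeomorphism property I would run the identical construction for the reversed pair $(g_\omega,f_\omega)$ — all the hypotheses are symmetric in $f$ and $g$ — obtaining $\tilde H_\omega=\Id+\tilde h_\omega$ with $\tilde h$ bounded by a tempered $\tilde T$ and $\tilde H_{\sigma(\omega)}\circ(A(\omega)+g_\omega)=(A(\omega)+f_\omega)\circ\tilde H_\omega$. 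Then $\Phi_\omega:=\tilde H_\omega\circ H_\omega$ satisfies $\Phi_{\sigma(\omega)}\circ(A(\omega)+f_\omega)=(A(\omega)+f_\omega)\circ\Phi_\omega$, with $\Phi_\omega-\Id\in E^{s,u}(\omega)$ and $\lVert\Phi_\omega-\Id\rVert_\infty\le T(\omega)+\tilde T(\omega)$; hence $\Phi_\omega-\Id$ is a fixed point of the operator $\mathcal L$ built from the pair $(f_\omega,f_\omega)$, for which the zero family is obviously also a fixed point, so by uniqueness $\Phi_\omega=\Id$ for $\mathbb P$--a.e.\ $\omega$; symmetrically $H_\omega\circ\tilde H_\omega=\Id$. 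Thus each $H_\omega$ is a homeomorphism with inverse $\tilde H_\omega$.

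\emph{Essential uniqueness.} This is already contained in the fixed point argument. If $(H_\omega')_{\omega\in\Omega}$ is another family with the stated properties, then $h'_\omega:=H_\omega'-\Id$ is $E^{s,u}(\omega)$--valued, measurable and bounded by some tempered $T'$, and precisely because $T'$ is tempered the boundary terms in the difference-equation inversion vanish by~\eqref{d1}--\eqref{d2}, so $h'=\mathcal L h'$. Choosing a slowly varying tempered weight that dominates both $T$ and $T'$ places $h$ and $h'$ in one complete metric space on which $\mathcal L$ is a contraction, whence $h=h'$ and $H_\omega=H_\omega'$ for $\mathbb P$--a.e.\ $\omega$.
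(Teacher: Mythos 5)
Your proposal is correct and follows essentially the same route as the paper: rewrite $H_\omega=\Id+h_\omega$, reduce~\eqref{1lin} to a functional equation whose $\Pi^1$/$\Pi^3$ components are solved by summing the dichotomy Green's function along the trajectory of $A+f$, set up a Banach fixed-point argument in a space of families weighted by a slowly varying tempered random variable (the paper's $C$ plays the role of your $W$), exploit the cancellation between $K(\sigma^{\cdot}(\omega))$ in the dichotomy bound and the $c/K(\sigma(\cdot))$ Lipschitz constant to get an $\omega$-independent contraction ratio, and then obtain the inverse and essential uniqueness by constructing $\tilde H$ for the reversed pair and invoking uniqueness for the $(f,f)$-problem. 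The paper packages the last step as a standalone uniqueness lemma showing any family with the stated properties must yield a fixed point of $\mathcal T$, which is exactly the "boundary terms vanish by temperedness" argument you sketch.
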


\begin{proof}
For $\omega \in \Omega$, we consider two systems:
\begin{equation}\label{S1}
\begin{split}
x(n+1) &=A(\sigma^n (\omega))x(n)+f_{\sigma^n (\omega)}(x(n)+y(n)) \\
y(n+1)&=A(\sigma^n (\omega))y(n),
\end{split}
\end{equation}
and
\begin{equation}\label{S2}
\begin{split}
x(n+1) &=A(\sigma^n (\omega))x(n)+g_{\sigma^n (\omega)}(x(n)+y(n)) \\
y(n+1)&=A(\sigma^n (\omega))y(n).
\end{split}
\end{equation}
For $(\xi, \eta)\in E^{s,u}(\omega)\times E^c(\omega)$, let $n \mapsto (x_1(n, \omega, \xi, \eta), y(n, \omega, \eta))$ denote the solution of~\eqref{S1} such that $(x_1(0, \omega, \xi, \eta), y(0, \omega, \eta))=(\xi, \eta)$. Similarly, let  $n \mapsto (x_2(n, \omega, \xi, \eta), y(n, \omega, \eta))$ denote the solution of~\eqref{S2} satisfying $(x_2(0, \omega, \xi, \eta), y(0, \omega, \eta))=(\xi, \eta)$.

It follows from our assumptions and~\cite[Proposition 4.3.3]{Arnold} that there exists a tempered random variable $C\colon \Omega \to (0, \infty)$ such that for $\mathbb P$-a.e. $\omega \in \Omega$ and $n\in \Z$,
\begin{equation}\label{KDC}
K(\sigma(\omega))D(\omega)\le C(\omega) \quad \text{and} \quad C(\sigma^n (\omega))\le C(\omega)e^{\frac{\lambda}{2}|n|}.
\end{equation}
Let $Y_\infty$ denote the space of all maps $h\colon \Omega \times X\to X$ with the property that:
\begin{itemize}
\item for $\mathbb P$-a.e. $\omega \in \Omega$, $h(\omega, \cdot) \colon X\to X$ is continuous;
\item for $\mathbb P$-a.e. $\omega \in \Omega$ and $x\in X$, $h(\omega, x)\in E^{s,u}(\omega)$;
\item for $x\in X$, $\omega \mapsto h(\omega, x)$ is measurable;
\item \[
||h||_\infty':=\esssup_{\omega \in \Omega}(C(\omega)^{-1}||h(\omega, \cdot)||_\infty)<\infty.
\]
\end{itemize}
It is straightforward to verify that  $(Y_\infty, ||\cdot ||_\infty')$ is a Banach space.  For $\omega \in \Omega$ and $n\in \Z$, set
\[
\mathcal G(\omega, n):=\begin{cases}
\cA(\omega, n)\Pi^1(\omega) & \text{for $n\ge 0$;}\\
-\cA(\omega,n)\Pi^3(\omega) & \text{for $n<0$.}
\end{cases}
\]
For $h\in Y_\infty$, let
\[
(\mathcal T h)(\omega, x):=\sum_{n\in \Z}\mathcal G(\sigma^{-n} (\omega), n)p(n, \omega, x),
\]
where
\begin{equation}\label{eq: def p}
\begin{split}
p(n, \omega, x) &=g_{\sigma^{-(n+1)}(\omega)}(x_1(-(n+1), \omega, \xi, \eta)+y(-(n+1), \omega, \eta) \\
&\phantom{=}+h(\sigma^{-(n+1)}(\omega), x_1(-(n+1), \omega, \xi, \eta)+y(-(n+1), \omega, \eta))) \\
&\phantom{=}-f_{\sigma^{-(n+1)}(\omega)}(x_1(-(n+1), \omega, \xi, \eta)+y(-(n+1), \omega, \eta)),
\end{split}
\end{equation}
for $n\in \Z$, $\omega \in \Omega$ and $x\in X$, where $\xi=(\Id-\Pi^2(\omega))x$, $\eta=x-\xi$.

By~\eqref{d1}, \eqref{d2},\eqref{eq: f and g bounded} and~\eqref{KDC}, we have that 
\begin{equation}\label{ESTIMATES}
\begin{split}
|(\mathcal T h)(\omega, x)| &\le \sum_{n\in \Z}\lVert \mathcal G(\sigma^{-n} (\omega), n)\rVert \cdot (\lVert g_{\sigma^{-(n+1)}(\omega)}\rVert_\infty+\lVert f_{\sigma^{-(n+1)}(\omega)}\rVert_\infty)\\
&\le 2\sum_{n=0}^\infty K(\sigma^{-n}(\omega))e^{-\lambda n}D(\sigma^{-(n+1)}(\omega))\\
&\phantom{\le}+2\sum_{n=1}^\infty K(\sigma^n (\omega))e^{-\lambda n}D(\sigma^{n-1}(\omega))\\
&\le 2\sum_{n=0}^\infty e^{-\lambda n}C(\sigma^{-(n+1)}(\omega))+2\sum_{n=1}^\infty e^{-\lambda n} C(\sigma^{n-1}(\omega))\\
&\le 2e^{\frac{\lambda}{2}}C(\omega)\bigg (\sum_{n=0}^\infty e^{-\frac{\lambda}{2}n}+\sum_{n=1}^\infty e^{-\frac{\lambda}{2}n}\bigg )\\
&\le 2e^{\frac{\lambda}{2}} \frac{1+e^{-\frac{\lambda}{2}}}{1-e^{-\frac{\lambda}{2}}}C(\omega), 
\end{split}
\end{equation}
for $\mathbb P$-a.e. $\omega \in \Omega$ and $x\in X$. Hence, $||\mathcal Th||_\infty' <\infty$.

We now claim that for $\mathbb P$-a.e. $\omega \in \Omega$, $x\mapsto \mathcal Th (\omega, x)$ is a continuous map. We begin by observing that due to the continuity of maps $f_\omega$,  we have that 
\begin{equation}\label{cont}(\xi, \eta)\mapsto (x_1(n, \omega, \xi, \eta), y(n,\omega, \eta)) \quad \text{is continuous,} \end{equation} 
for $\mathbb P$-a.e. $\omega \in \Omega$ and $n\in \Z$.
Fix now an arbitrary $\omega \in \Omega$ for which~\eqref{d1}, \eqref{d2}, \eqref{KDC}  and~\eqref{cont} hold, and such that $h(\sigma^n \omega, \cdot)$ is continuous for each $n\in \Z$. We note that the set of $\omega$'s satisfying these properties is a full measure set.
Take $x\in X$ and $\epsilon >0$. Choose $N>0$ such that 
\[
\sum_{|n|>N}\lVert \mathcal G(\sigma^{-n} (\omega), n)\rVert \cdot (\lVert g_{\sigma^{-(n+1)}(\omega)}\rVert_\infty+\lVert f_{\sigma^{-(n+1)}(\omega)}\rVert_\infty)<\frac{\epsilon}{3}.
\]
We note that the existence of $N$ follows from the estimates in~\eqref{ESTIMATES}. Furthemore, by using~\eqref{cont} together with the continuity of $f_{\sigma^n (\omega)}$, $g_{\sigma^n (\omega)}$ and $h(\sigma^n(\omega), \cdot)$, we conclude that 
\[
\sum_{|n|\le N}\|\mathcal G(\sigma^{-n} (\omega), n)\rVert \cdot |p(n,\omega, x)-p(n,\omega, x')|<\frac{\epsilon}{3},
\]
whenever $|x-x'|$ is sufficiently small. The last two estimates easily imply that 
\[
|(\mathcal T h)(\omega, x)-(\mathcal T h)(\omega, x')| < \epsilon,
\]
whenever $|x-x'|$ is sufficiently small. Hence, $x\mapsto \mathcal Th (\omega, x)$ is a continuous map and 
thus $\mathcal T h\in Y_\infty$. 

Moreover, for $h_1, h_2\in Y_\infty$, we have using~\eqref{l2} that 
\[
\begin{split}
& |(\mathcal Th_1)(\omega, x)-(\mathcal Th_2)(\omega, x)| \\
&\le \sum_{n\in \Z} \frac{c}{K(\sigma^{-n}(\omega))}\lVert  \mathcal G(\sigma^{-n} (\omega), n)\rVert \cdot \lVert h_1(\sigma^{-(n+1)}(\omega), \cdot)-h_2(\sigma^{-(n+1)}(\omega), \cdot)\rVert_\infty \\
&\le c \sum_{n=0}^\infty e^{-\lambda n}C(\sigma^{-(n+1)}(\omega)) \lVert h_1-h_2\rVert_\infty' +c\sum_{n=1}^\infty e^{-\lambda n}C(\sigma^{n-1}(\omega))\lVert h_1-h_2\rVert_\infty' \\
&\le ce^{\frac{\lambda}{2}}\frac{1+e^{-\frac{\lambda}{2}}}{1-e^{-\frac{\lambda}{2}}}C(\omega)\lVert h_1-h_2\rVert_\infty' ,
\end{split}
\]
for $\mathbb P$-a.e. $\omega \in \Omega$ and $x\in X$. Therefore, 
\[
\lVert \mathcal Th_1-\mathcal Th_2\rVert_\infty'\le ce^{\frac{\lambda}{2}}\frac{1+e^{-\frac{\lambda}{2}}}{1-e^{-\frac{\lambda}{2}}}\lVert h_1-h_2\rVert_\infty' ,
\]
 for $h_1, h_2\in Y_\infty$. If $c>0$ is such that 
\[
ce^{\frac{\lambda}{2}}\frac{1+e^{-\frac{\lambda}{2}}}{1-e^{-\frac{\lambda}{2}}}<1,
\]
we conclude that $h\mapsto \mathcal Th$ is a contraction on $(Y_\infty,\|\cdot\|_\infty')$. Hence, $h\mapsto \mathcal Th$ has a unique fixed point $h\in Y_\infty$. Thus, 
\[
h(\omega, x)=\sum_{n\in \Z}\mathcal G(\sigma^{-n} (\omega), n)p(n, \omega, x),
\]
for $\mathbb P$-a.e. $\omega \in \Omega$ and $x\in X$. Hence, we have that 
\[
\begin{split}
&h (\sigma ( \omega), x_1(1, \omega, \xi, \eta)+y(1, \omega, \eta)) \\
&=\sum_{n\in \Z}\mathcal G(\sigma^{-(n-1)}(\omega), n) \bigg (g_{\sigma^{-n}(\omega)}(x_1(-n, \omega, \xi, \eta) 
+y(-n, \omega, \eta) \\
&\phantom{=}
+h(\sigma^{-n}(\omega), x_1(-n, \omega, \xi, \eta)+y(-n, \omega, \eta) )) \bigg )\\
&\phantom{=}-\sum_{n\in \Z}\mathcal G(\sigma^{-(n-1)}(\omega), n)f_{\sigma^{-n}(\omega)}(x_1(-n, \omega, \xi, \eta)+y(-n, \omega, \eta) )\\
&=\Pi^1(\sigma(\omega))g_\omega(\xi+\eta+h(\omega, \xi+ \eta))\\
&\phantom{=}+A(\omega)\sum_{n=0}^\infty \mathcal G(\sigma^{-n}(\omega), n) \bigg (g_{\sigma^{-(n+1)}(\omega)}(x_1(-(n+1), \omega, \xi, \eta) \\
&\phantom{=}+y(-(n+1), \omega, \eta) +h(\sigma^{-(n+1)}(\omega), x_1(-(n+1), \omega, \xi, \eta)+y(-(n+1), \omega, \eta))) \bigg ) \\
&\phantom{=}-\Pi^1(\sigma (\omega))f_\omega(\xi +\eta)\\
&\phantom{=} -A(\omega)\sum_{n=0}^\infty \mathcal G(\sigma^{-n}(\omega), n)f_{\sigma^{-(n+1)}(\omega)}(x_1(-(n+1), \omega, \xi, \eta)+y(-(n+1), \omega, \eta))\\
&\phantom{=}+\Pi^3(\sigma (\omega))g_\omega(\xi+\eta+h(\omega, \xi+ \eta)) \\
&\phantom{=}+A(\omega)\sum_{n=-\infty}^{-1}\mathcal G(\sigma^{-n}(\omega), n) \bigg (g_{\sigma^{-(n+1)}(\omega)}(x_1(-(n+1), \omega, \xi, \eta) \\
&\phantom{=}+y(-(n+1), \omega, \eta) +h(\sigma^{-(n+1)}(\omega), x_1(-(n+1), \omega, \xi, \eta)+y(-(n+1), \omega, \eta))) \bigg )\\
&\phantom{=}-\Pi^3(\sigma (\omega))f_\omega(\xi+\eta) \\
&\phantom{=}-A(\omega)\sum_{n=-\infty}^{-1} \mathcal G(\sigma^{-n}(\omega), n)f_{\sigma^{-(n+1)}(\omega)}(x_1(-(n+1), \omega, \xi, \eta)+y(-(n+1), \omega, \eta))\\
&=A(\omega)h(\omega, x)+g_\omega(\xi+\eta+h(\omega, \xi+ \eta))-f_\omega(\xi+\eta),
\end{split}
\]
and thus
\begin{equation}\label{line}
h (\sigma ( \omega), A(\omega)x+f_\omega(x)) =A(\omega)h(\omega, x)+g_\omega(x+h(\omega, x))-f_\omega(x),
\end{equation}
for $\mathbb P$-a.e. $\omega \in \Omega$ and $x\in X$.

For $\omega \in \Omega$, we define $H_\omega \colon X\to X$ by 
\[
H_\omega(x)=x+h(\omega, x), \quad x\in X.
\]
By~\eqref{line}, we have that~\eqref{1lin} holds. 

In analogous manner, for $\bar h \in Y_\infty$ we define $\mathcal T' \bar h$ by
\[
(\mathcal T' \bar h)(\omega, x)=\sum_{n\in \Z}\mathcal G(\sigma^{-n} (\omega), n)r(n, \omega, x),
\]
where
\[
\begin{split}
r(n, \omega, x) &=f_{\sigma^{-(n+1)}(\omega)}(x_2(-(n+1), \omega, \xi, \eta)+y(-(n+1), \omega, \eta) \\
&\phantom{=}+\bar h(\sigma^{-(n+1)}(\omega), x_2(-(n+1), \omega, \xi, \eta))) \\
&\phantom{=}-g_{\sigma^{-(n+1)}(\omega)}(x_2(-(n+1), \omega, \xi, \eta)),
\end{split}
\]
for $n\in \Z$, $\omega \in \Omega$ and $x\in X$, where $\xi=(\Id-\Pi^2(\omega))x$, $\eta=x-\xi$. By arguing in  a same manner as for $\mathcal T$, we have that (for $c>0$ sufficiently small) $\bar h\mapsto \mathcal T' \bar h$ is a contraction and thus it has a unique fixed point $\bar h \in Y_\infty$. For $\omega \in \Omega$, we define
$\bar H_\omega \colon X\to X$ by
\[
\bar H_\omega(x)=x+\bar h(\omega, x), \quad x\in X.
\]
Then, we have that 
\begin{equation}\label{2lin}
\bar H_{\sigma (\omega)} \circ (A(\omega)+g_\omega)=(A(\omega)+f_\omega)\circ \bar H_\omega.
\end{equation}
In order to complete the  proof  of our theorem we need the following auxiliary result.

\begin{lemma}\label{lem: uniqueness}
Let $(Q_\omega)_{\omega \in \Omega}$ be a family of continuous  maps $Q_\omega \colon X\to X$ such that:
\begin{itemize}
\item for $\mathbb P$-a.e. $\omega \in \Omega$,
\begin{equation}\label{eq: conj lemma}
Q_{\sigma(\omega)}\circ (A(\omega)+f_\omega)=(A(\omega)+g_\omega)\circ Q_\omega;
\end{equation}
\item for $x\in X$, $\omega \mapsto Q_\omega(x)$ is measurable;
\item there exists a tempered random variable $T\colon \Omega \to (0, \infty)$ such that
\begin{equation}\label{eq: bound}
\lVert Q_\omega-\Id\rVert_\infty \le T(\omega), \quad \text{for $\mathbb P$-a.e. $\omega \in \Omega$;}
\end{equation}
\item $Q_{\omega}(x)-x \in E^{s,u}(\omega) $ for $\mathbb P$-a.e. $\omega \in \Omega$ and every $x\in X$.
\end{itemize}
Then, $q(\omega ,x):=Q_{\omega}(x)-x  $ is a fixed point of $\mathcal{T}$. In particular, the family of maps $(Q_\omega)_{\omega \in \Omega}$ with the above properties is essentially unique.
\end{lemma}
\begin{proof}[Proof of the lemma]
Given $\omega \in \Omega$ and $x\in X$, let us consider $\xi=(\Id-\Pi^2(\omega))x$, $\eta=x-\xi$ and $(x_1(n,\omega,\xi,\eta),y(n,\omega , \eta))_{n\in \mathbb{Z}}$ as before. We start observing that condition \eqref{eq: conj lemma} implies that 
\begin{displaymath}
\begin{split}
q(\sigma(\omega ) ,A(\omega)x+f_\omega (x))&= Q_{\sigma(\omega)}(A(\omega)x+f_\omega (x))-\left( A(\omega)x+f_\omega (x)\right)\\
&=\left( A(\omega)+g_\omega \right)( Q_\omega(x))- A(\omega)x-f_\omega (x)\\
&=A(\omega)q(\omega,x)+g_\omega(x+q(\omega,x))-f_\omega(x).
\end{split}
\end{displaymath}
Thus, recalling that $A(\omega)x+f_\omega (x)=x_1(1,\omega,\xi,\eta)+y(1,\omega , \eta)$ we can rewrite the previous equality as
\begin{displaymath}
\begin{split}
&q(\sigma(\omega ) ,x_1(1,\omega,\xi,\eta)+y(1,\omega , \eta)) \\
&=A(\omega)q(\omega,x_1(0,\omega,\xi,\eta)+y(0,\omega , \eta))\\
&+g_\omega(x_1(0,\omega,\xi,\eta)+y(0,\omega , \eta)+q(\omega,x_1(0,\omega,\xi,\eta)+y(0,\omega , \eta)))\\
&-f_\omega(x_1(0,\omega,\xi,\eta)+y(0,\omega , \eta)),
\end{split}
\end{displaymath}
which implies that
\begin{displaymath}
\begin{split}
&q(\omega  ,x) \\
&=A(\sigma^{-1}(\omega ))q(\sigma^{-1}(\omega ),x_1(-1,\omega,\xi,\eta)+y(-1,\omega , \eta))\\
&+g_{\sigma^{-1}(\omega )}(x_1(-1,\omega,\xi,\eta)+y(-1,\omega , \eta)+q(\sigma^{-1}(\omega ),x_1(-1,\omega,\xi,\eta)+y(-1,\omega , \eta)))\\
&-f_{\sigma^{-1}(\omega )}(x_1(-1,\omega,\xi,\eta)+y(-1,\omega , \eta)).
\end{split}
\end{displaymath}
Iterating this formula we conclude that for every $k> 0$,
\begin{displaymath}
\begin{split}
q(\omega  ,x)&=\cA(\sigma^{-k}(\omega),k)q(\sigma^{-k}(\omega ),x_1(-k,\omega,\xi,\eta)+y(-k,\omega , \eta))\\
&+\sum_{n=0}^{k-1} \cA(\sigma^{-n}(\omega),n)p(n,\omega,x)\\
\end{split}
\end{displaymath}
where $p(n,\omega,x)$ is as defined in \eqref{eq: def p} with $h=q$. Thus, using property \eqref{eq: invariance projections} we conclude that  
\begin{displaymath}
\begin{split}
\Pi^1(\omega) q(\omega  ,x)&=\cA(\sigma^{-k}(\omega),k)\Pi^1(\sigma^{-k}(\omega))q(\sigma^{-k}(\omega ),x_1(-k,\omega,\xi,\eta)+y(-k,\omega , \eta))\\
&+\sum_{n=0}^{k-1} \cA(\sigma^{-n}(\omega),n)\Pi^1(\sigma^{-n}(\omega))p(n,\omega,x)\\
&=\mathcal{G}(\sigma^{-k}(\omega),k)q(\sigma^{-k}(\omega ),x_1(-k,\omega,\xi,\eta)+y(-k,\omega , \eta))\\
&+\sum_{n=0}^{k-1} \mathcal{G} (\sigma^{-n}(\omega),n)p(n,\omega,x)\\
\end{split}
\end{displaymath}
for every $k>0$. It follows from~\eqref{eq: bound} that  for $\mathbb P$-a.e. $\omega \in \Omega$ and every $k>0$,
\begin{displaymath}
|q(\sigma^{-k}(\omega ),x_1(-k,\omega,\xi,\eta)+y(-k,\omega , \eta))|\leq T(\sigma^{-k}(\omega )).
\end{displaymath}
Combining this observation with \eqref{eq: def tempered} and \eqref{d1} we conclude that 
\begin{displaymath}
|\mathcal{G}(\sigma^{-k}(\omega),k)q(\sigma^{-k}(\omega ),x_1(-k,\omega,\xi,\eta)+y(-k,\omega , \eta)|\xrightarrow{k\to +\infty}0
\end{displaymath}
for $\mathbb P$-a.e. $\omega \in \Omega$. Consequently,
\begin{displaymath}
\begin{split}
\Pi^1(\omega) q(\omega  ,x)&=\sum_{n=0}^{+\infty} \mathcal{G} (\sigma^{-n}(\omega),n)p(n,\omega,x).\\
\end{split}
\end{displaymath}
Similarly, one can prove that 
\begin{displaymath}
\begin{split}
\Pi^3(\omega) q(\omega  ,x)&=\sum_{n=-\infty}^{-1} \mathcal{G} (\sigma^{-n}(\omega),n)p(n,\omega,x).\\
\end{split}
\end{displaymath}
Therefore, since $q(\omega,x)=Q_{\omega}(x)-x \in E^{s,u}(\omega) $ for $\mathbb P$-a.e. $\omega \in \Omega$ and every $x\in X$, we conclude that
\begin{displaymath}
\begin{split}
q(\omega  ,x)&=\Pi^1(\omega) q(\omega  ,x)+\Pi^3(\omega) q(\omega  ,x)\\
&=\sum_{n\in \Z} \mathcal{G} (\sigma^{-n}(\omega),n)p(n,\omega,x)\\
&=(\mathcal{T}q)(\omega,x),
\end{split}
\end{displaymath}
for $\mathbb P$-a.e. $\omega \in \Omega$ and every $x\in X$. Therefore, $q$ is a fixed point for $\mathcal T$ as claimed. 
 The uniqueness property follows easily from previous observations and from the fact that $\mathcal{T}$ is a contraction. The proof of the lemma is completed.
\end{proof}
Our objective now is to prove that $H_\omega\circ \bar{H}_\omega =\Id$ and $\bar{H}_\omega \circ H_\omega =\Id$. We begin by observing that~\eqref{1lin} and~\eqref{2lin} readily imply that
\begin{displaymath}
\bar{H}_{\sigma (\omega)}\circ H_{\sigma (\omega)}\circ \left(A(\omega)+f_\omega \right)=\left(A(\omega)+f_\omega \right)\circ \bar{H}_{\omega}\circ H_{\omega},
\end{displaymath}
for $\mathbb{P}$-a.e $\omega\in \Omega$. Moreover, note that
\begin{displaymath}
(\bar{H}_{\omega}\circ H_{\omega})(x)-x=(\bar{H}_{\omega}\circ H_{\omega})(x)-H_{\omega}(x)+ H_{\omega}(x)-x,
\end{displaymath}
and thus by using the  properties of $\bar{H}$ and $H$ we conclude that $\bar{H}_{\omega}\circ H_{\omega}(x)-x \in E^{s,u}(\omega) $ for $\mathbb P$-a.e. $\omega \in \Omega$ and every $x\in X$. Moreover,  there exists a tempered random variable $\tilde{T}\colon \Omega \to (0, \infty)$ such that
\begin{equation*}
\lVert \bar{H}_{\omega}\circ H_{\omega}-\Id\rVert_\infty \le \tilde{T}(\omega), \quad \text{for $\mathbb P$-a.e. $\omega \in \Omega$.}
\end{equation*}
Finally, since $H_\omega$ and $\bar{H}_\omega$ are strongly measurable we have that $\omega \to (\bar{H}_{\omega}\circ H_{\omega})(x)$ is measurable for every $x\in X$. Thus, it follows from the uniqueness given in Lemma \ref{lem: uniqueness} (applied to the case when $g_\omega=f_\omega$) that $\bar{H}_{\omega}\circ H_{\omega}=\Id$ for $\mathbb P$-a.e. $\omega \in \Omega$. By changing the roles of $f_\omega$ and $g_\omega$ and $H_{\omega}$ and $\bar{H}_{\omega}$ we also conclude that $H_{\omega}\circ \bar{H}_{\omega}=\Id$, for $\mathbb P$-a.e. $\omega \in \Omega$. In particular, $H_{\omega}$ is a homeomorphism for $\mathbb P$-a.e. $\omega \in \Omega$. The proof of the theorem is completed. 
\end{proof}

\section{H\"older linearization of random dynamical systems}
In this section we show that under some additional assumptions the conjugacies obtained in Theorem \ref{theo: hg} are H\"older continuous. We retain all the notation and the  assumptions introduced in the previous section. Moreover, we assume that 
$A(\omega)$ is invertible for $\mathbb{P}$-a.e $\omega \in \Omega$.
In this case, for $\mathbb{P}$-a.e $\omega \in \Omega$, we consider
\[
\cA(\omega, n)=\begin{cases}
A(\sigma^{n-1}(\omega))\cdots A(\omega) & \text{if $n>0$,}\\
\Id & \text{if $n=0$;}\\
A(\sigma^{-|n|}(\omega))^{-1}\cdots A(\sigma^{-1}(\omega))^{-1} & \text{if $n<0$.}
\end{cases}
\]
Finally, suppose that 
 there exists $\rho >0$ and a tempered random variable $Z\colon \Omega \to [1, +\infty)$ 
 such that
\begin{equation}\label{eq: growth of A}
\|\cA(\omega, n)\|\leq  Z(\omega) e^{\rho |n|}, 
\end{equation}
for  $\mathbb{P}\text{-a.e. }\omega\in \Omega  \text{ and } n\in \mathbb{Z}$.
We may assume without loss of generality that $\rho \geq \lambda$ (where $\lambda>0$ is given in \eqref{d1} and \eqref{d2}) and that 
\begin{equation}\label{KZ}
K(\omega) \le Z(\omega), \quad \text{for $\mathbb P$-a.e. $\omega \in \Omega$.}
\end{equation}
Set $\alpha_0:=\lambda/\rho \in (0, 1]$.
\begin{remark}
Observe that condition \eqref{eq: growth of A} is satisfied, for instance, whenever
\begin{displaymath}
\int \log \|A(\omega)\|d\mathbb{P}<+\infty \text{ and } \int \log \|A(\omega)^{-1}\|d\mathbb{P}<+\infty.
\end{displaymath}
\end{remark}
Observe also that it follows from~\eqref{d1} and~\eqref{d2} that 
\begin{equation}\label{eq: def M}
\|\Pi^2(\omega)\|\leq M(\omega) \text{ for } \mathbb{P}\text{-a.e. } \omega \in \Omega,
\end{equation}
where
\[
M(\omega)=1+2K(\omega), \quad \omega \in \Omega. 
\]

\begin{theorem}\label{HL}
Let $\alpha\in (0,\alpha_0)$. 
There exists a tempered random variable $d:\Omega \to (0,+\infty)$ such that, if
\begin{equation}\label{l1V2}
|f_\omega(x)-f_\omega(z)|\le d(\omega)|x-z|
\end{equation}
and
\begin{equation}\label{l2V2}
|g_\omega(x)-g_\omega(z)|\le d(\omega)|x-z|
\end{equation}
for $\mathbb{P}$-a.e. $\omega \in \Omega$ and every $x,z\in X$, then the conjugacies $H_\omega$ and the associated inverses $H_\omega ^{-1}$ given by Theorem \ref{theo: hg} are $\alpha$-H\"older continuous when restricted to bounded subsets of $X$ for $\mathbb{P}$-a.e $\omega\in \Omega$. More precisely, given a bounded subset $\tilde{X}\subset X$, there exists a constant $T>0$ depending only on $\tilde{X}$ such that
\begin{equation*}
|H_\omega(x)-H_\omega(z)|\leq T|x-z|^\alpha
\end{equation*}
and 
\begin{equation*}
| H_\omega^{-1}(x)- H_\omega^{-1}(z)|\leq T|x-z|^\alpha
\end{equation*}
for $\mathbb{P}$-a.e $\omega\in \Omega$ and every $x,z\in \tilde{X}$.
\end{theorem}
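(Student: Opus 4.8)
The conjugacies produced by Theorem~\ref{theo: hg} have the form $H_\omega=\Id+h(\omega,\cdot)$ and $H_\omega^{-1}=\bar H_\omega=\Id+\bar h(\omega,\cdot)$, where $h$ is the fixed point of $\mathcal{T}$ and $\bar h$ the fixed point of $\mathcal{T}'$. Since $\mathcal{T}'$ is built exactly as $\mathcal{T}$ with the roles of $(f_\omega)$ and $(g_\omega)$ interchanged, it suffices to establish the H\"older bound for $h$; the bound for $H_\omega^{-1}$ will follow from the very same argument applied to $\mathcal{T}'$. So the whole proof is an analysis of the single fixed point identity $h=\mathcal{T}h$. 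As a preliminary step I would record quantitative Lipschitz estimates for the nonlinear flow. Since $A(\omega)$ and the maps $A(\omega)+f_\omega$ are now invertible, the solution of~\eqref{S1} through a point $x$ at time $0$ is defined for every $m\in\Z$; write $\Phi_\omega(m)\colon X\to X$ for the corresponding time-$m$ map. Combining~\eqref{eq: growth of A} with the variation-of-constants formula, \eqref{l1V2}--\eqref{l2V2}, \eqref{eq: f and g bounded} and a discrete Gronwall inequality, and using temperedness to absorb the subexponentially growing nonlinear corrections, one gets: for any prescribed $\bar\epsilon>0$ there is a tempered $W\colon\Omega\to[1,\infty)$ — which may also be taken to dominate $K,Z,D,M,C$ and to satisfy $W(\sigma^j(\omega))\le W(\omega)e^{\bar\epsilon|j|}$ for all $j$ (cf.\ \cite[Proposition 4.3.3]{Arnold}) — such that $\mathrm{Lip}\,\Phi_\omega(m)\le W(\omega)e^{(\rho+\bar\epsilon)|m|}$ and $\Phi_\omega(m)(B(0,R))\subset B(0,W(\omega)e^{(\rho+\bar\epsilon)|m|}(R+1))$, for $\mathbb P$-a.e.\ $\omega$, all $m\in\Z$ and all $R>0$.

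Next I would fix the tempered variable of the statement of the form $d(\omega):=\delta\,W(\omega)^{-k}$, with $k$ a fixed large integer and $\delta>0$ a small constant, where $W$ is taken with enough ``headroom'' that every product of $d$, evaluated at a point $\sigma^j(\omega)$, with the tempered factors naturally occurring at that scale — including a tail-supremum variable of the form $\sup_{m\ge 1}K(\sigma^{-m}(\omega))D(\sigma^{-(m+1)}(\omega))e^{-\bar\epsilon m}$ — is bounded by a constant multiple of $\delta$. Given a bounded set $\tilde X\subset B(0,R)$ and $x,z\in\tilde X$, I expand $h(\omega,x)-h(\omega,z)$ via $h=\mathcal{T}h$ and split the series $\sum_{n\in\Z}\mathcal{G}(\sigma^{-n}(\omega),n)\big(p(n,\omega,x)-p(n,\omega,z)\big)$ at a level $|n|=N$, with $N=N(\omega,|x-z|)$ chosen comparable to $\theta\big(\log(1/|x-z|)+\log W(\omega)\big)$ for a suitable $\theta>0$. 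On the tail $|n|>N$ I bound $|p(n,\omega,x)-p(n,\omega,z)|\le 2D(\sigma^{-(n+1)}(\omega))$ and use \eqref{d1}--\eqref{d2} and \eqref{KZ}; by the choice of $N$ this part is at most $T_1|x-z|^\alpha$. On the range $|n|\le N$ I use \eqref{l1V2}--\eqref{l2V2} to obtain
\[
|p(n,\omega,x)-p(n,\omega,z)|\le c\,d(\sigma^{-(n+1)}(\omega))\Big(|u_n-u_n'|+\big|h(\sigma^{-(n+1)}(\omega),u_n)-h(\sigma^{-(n+1)}(\omega),u_n')\big|\Big),
\]
where $u_n,u_n'$ are the time-$(-(n+1))$ points of the orbits of $x$ and $z$ for~\eqref{S1}; by the preliminary estimates these lie in $B\big(0,W(\omega)e^{(\rho+\bar\epsilon)(|n|+1)}(R+1)\big)$ and $|u_n-u_n'|\le W(\omega)e^{(\rho+\bar\epsilon)(|n|+1)}|x-z|$. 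The contribution of the $|u_n-u_n'|$ term alone, after using $d=\delta W^{-k}$ to absorb all prefactors, is of order $\delta\,e^{(\rho-\lambda+O(\bar\epsilon))N}|x-z|$, which with the above choice of $N$ and with $\alpha<\alpha_0=\lambda/\rho$ is $\le T_2|x-z|^\alpha$: the strict inequality $\alpha<\alpha_0$ is exactly what makes the $|x-z|$-exponent come out right and lets the tempered factor hidden in $N$ be reabsorbed.

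The remaining summand, which contains $|h(\sigma^{-(n+1)}(\omega),u_n)-h(\sigma^{-(n+1)}(\omega),u_n')|$, makes the estimate self-referential. I would close this bootstrap by rerunning the iteration $h_j:=\mathcal{T}^j h_0$ (which converges to $h$ in $\|\cdot\|_\infty'$) while also tracking the weighted H\"older seminorm $\|\phi\|_\alpha:=\esssup_\omega\,\sup_{R\ge1}R^{-\beta}\sup_{x\ne z\in B(0,R)}|\phi(\omega,x)-\phi(\omega,z)|/|x-z|^\alpha$, for a fixed exponent $\beta>0$ with $\alpha+\beta<\alpha_0$. Repeating the estimates of the previous paragraph with $\|h_j\|_\alpha$ in place of the unknown seminorm, one checks that $\|h_{j+1}\|_\alpha\le\kappa\|h_j\|_\alpha+\mathrm{const}$ for some $\kappa<1$ as soon as $\delta$ is small — the resulting $n$-series converging precisely because $\alpha+\beta<\alpha_0$ — so that $\sup_j\|h_j\|_\alpha<\infty$, and passing to the pointwise limit gives $\|h\|_\alpha<\infty$. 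Feeding this back into the $|n|\le N$ sum (a part which turns out to be bounded uniformly in $N$) bounds the last summand by $T_3|x-z|^\alpha$ as well. Combining $T_1,T_2,T_3$ and using $|x-z|\le(2R)^{1-\alpha}|x-z|^\alpha$ to homogenise the leftover linear term yields $|H_\omega(x)-H_\omega(z)|\le T|x-z|^\alpha$ for $\mathbb P$-a.e.\ $\omega$ and all $x,z\in\tilde X$, with $T$ depending only on the radius $R$ of $\tilde X$; the identical computation for $\mathcal{T}'$ gives the estimate for $H_\omega^{-1}$.

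I expect two genuine difficulties. The first is closing the bootstrap: the H\"older seminorm of $h$ at the shifted samples $\sigma^{-(n+1)}(\omega)$ reappears inside the estimate for $h$ at $\omega$, and one must keep careful track of the radii of the balls on which that seminorm is evaluated (which grow like $W(\omega)e^{(\rho+\bar\epsilon)|n|}R$), which is why the weight $R^{-\beta}$ with $\alpha+\beta<\alpha_0$ is needed rather than a plain seminorm. The second is purely bookkeeping but delicate: arranging that every tempered random prefactor appearing in the estimates is absorbed and that $T$ is genuinely $\omega$-independent, which forces the specific small, high-order tempered choice $d=\delta W^{-k}$ together with the $\omega$-dependent cut-off $N\asymp\theta(\log(1/|x-z|)+\log W(\omega))$. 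The one essentially dynamical ingredient — the interpolation at $|n|=N$ between the flow-Lipschitz bound (sharp for small $|n|$) and the a priori boundedness of $f,g$ and $h$ (sharp for large $|n|$) — is the standard mechanism behind H\"older linearization, and it is what produces the threshold $\alpha<\alpha_0=\lambda/\rho$.
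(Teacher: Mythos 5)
Your proposal is correct and reaches the same threshold $\alpha<\alpha_0=\lambda/\rho$, but by a different technical route than the paper. The paper never splits the series $\sum_n \mathcal{G}(\sigma^{-n}(\omega),n)\big(p(n,\omega,x)-p(n,\omega,z)\big)$ at a cutoff $N\asymp\log(1/|x-z|)$; instead it converts $f_\omega,g_\omega$ into $\alpha$-H\"older maps by the interpolation $|f_\omega(x)-f_\omega(z)|\le(2D(\omega))^{1-\alpha}d(\omega)^\alpha|x-z|^\alpha$ and bounds \emph{every} term of the series by $|x-z|^\alpha$ times roughly $e^{(\alpha\rho+O(\varepsilon)-\lambda)|n|}$, producing a single absolutely convergent series. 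Your cutoff mechanism — a priori bound on the tail, Lipschitz bound on the near range, balance at an $\omega$-dependent level $N$ — is the more classical alternative and does go through (the coefficient of $\log W(\omega)$ inside $N$ must be taken large enough to kill the tempered factors on the tail, which then forces the order $k$ in $d=\delta W^{-k}$ to be correspondingly large), but it obliges you to match two tails, whereas the interpolate-first version concentrates the tempered bookkeeping in the single choice of $d$. For the self-referential $h$-term the paper is also simpler than your bootstrap: it introduces the closed subset $Y^\alpha:=\{h\in Y_\infty : |h(\omega,x)-h(\omega,z)|\le|x-z|^\alpha \text{ for a.e.\ }\omega \text{ and all }x,z\in X\}$, shows $\mathcal{T}(Y^\alpha)\subset Y^\alpha$ after tuning $d$, and concludes that the unique fixed point of the contraction $\mathcal{T}$ automatically lies in $Y^\alpha$ — no iterate-tracking estimate $\|h_{j+1}\|_\alpha\le\kappa\|h_j\|_\alpha+\text{const}$ is required. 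Finally, your $R^{-\beta}$-weighted seminorm is a superfluous complication: every $h\in Y_\infty$ is bounded in sup norm, so a local $\alpha$-H\"older bound combined with boundedness already yields a \emph{global} $\alpha$-H\"older bound with constant independent of $R$; the growing radii $W(\omega)e^{(\rho+\bar\varepsilon)|n|}R$ that concerned you therefore carry no cost, and the plain unit-H\"older ball $Y^\alpha$ is exactly what one should use.
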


\begin{proof} Let $c\in (0,1)$ be a sufficiently small constant as in \eqref{l1} and \eqref{l2} so that Theorem \ref{theo: hg} holds. We are going to build the desired map $d:\Omega \to (0,+\infty)$ with the property that 
\begin{equation}\label{eq: aux c}
d(\omega)\le \frac{c}{Z(\sigma(\omega))}
\end{equation}
for $\mathbb{P}$-a.e. $\omega \in \Omega$. In particular, due to~\eqref{KZ},  whenever \eqref{l1V2} and \eqref{l2V2} are satisfied the same is true for \eqref{l1} and \eqref{l2}. 

Let $\varepsilon>0$ be such that $\alpha \rho +10\varepsilon -\lambda<0$. It follows from our assumptions and~\cite[Proposition 4.3.3]{Arnold} that there exists a tempered random variable $N\colon \Omega \to [1, \infty)$ such that for $\mathbb P$-a.e. $\omega \in \Omega$ and $n\in \Z$, 
\begin{equation}\label{eq: def of N}
\frac{1}{N(\omega)}\leq Z(\omega)( 2K(\omega)+M(\omega))+ Z(\sigma(\omega))+2D(\omega)\leq N(\omega) 
\end{equation}
and
\begin{equation}\label{eq: def of N2}
N(\omega)e^{-\varepsilon|n|}\leq  N(\sigma^n (\omega))\leq N(\omega)e^{\varepsilon|n|}.
\end{equation}
Thus, using \eqref{eq: f and g bounded}, \eqref{l1V2} and \eqref{eq: def of N},
\begin{equation}\label{eq: f is holder}
\begin{split}
|f_\omega (x)-f_\omega(z)|&=|f_\omega (x)-f_\omega(z)|^{1-\alpha}|f_\omega (x)-f_\omega(z)|^\alpha \\
&\leq \left(2D(\omega)\right)^{1-\alpha}d(\omega)^\alpha|x-z|^\alpha\\
&\leq N(\omega)d(\omega)^\alpha|x-z|^\alpha
\end{split}
\end{equation}
for $\mathbb{P}$-a.e. $\omega \in \Omega$ and every $x,z\in X$. Similarly, using \eqref{eq: f and g bounded}, \eqref{l2V2} and \eqref{eq: def of N},
\begin{equation}\label{eq: g is holder}
\begin{split}
|g_\omega (x)-g_\omega(z)|\leq N(\omega)d(\omega)^\alpha|x-z|^\alpha
\end{split}
\end{equation}
for $\mathbb{P}$-a.e. $\omega \in \Omega$ and every $x,z\in X$.

Let $Y_\infty$ be as in the proof of Theorem~\ref{theo: hg}. Furthermore, let $Y^\alpha\subset Y_\infty$ consist of all $h\in Y_\infty$ with the property that 
\begin{equation}\label{eq: Y holder}
|h(\omega,x)-h(\omega,z)|\leq  |x-z|^\alpha, 
\end{equation}
for $\mathbb{P}$-a.e. $\omega \in \Omega$ and every $x,z \in X$. 
We claim that whenever $c>0$ is sufficiently small, there exists $d:\Omega \to (0,+\infty)$ such that if \eqref{l1V2} and \eqref{l2V2} are satisfied then $\mathcal{T}(Y^\alpha)\subset Y^\alpha$,  where $\mathcal{T}$ is as in the proof of Theorem \ref{theo: hg}. Indeed, given $h\in Y^\alpha$, for $x,z\in X$ and $\mathbb{P}$-a.e $\omega\in \Omega$ we have that 
\begin{equation}\label{eq: first est T}
\begin{split}
|(\mathcal T h)(\omega, x)-(\mathcal T h)(\omega, z)|&=\left|\sum_{n\in \Z}\mathcal G(\sigma^{-n} (\omega), n)\left( p(n, \omega, x)-p(n,\omega ,z)\right)\right|\\
&\leq \sum_{n\in \Z}\| \mathcal G(\sigma^{-n} (\omega), n)\| \left| p(n, \omega, x)-p(n,\omega ,z)\right|\\
&\leq \sum_{n\in \Z}K(\sigma^{-n}(\omega))e^{-\lambda |n|} \left| p(n, \omega, x)-p(n,\omega ,z)\right|\\
&\leq N(\omega)\sum_{n\in \Z}e^{(-\lambda+\varepsilon) |n|} \left| p(n, \omega, x)-p(n,\omega ,z)\right|,
\end{split}
\end{equation}
where $p(n, \omega, x)$ and $p(n,\omega ,z)$ are as in \eqref{eq: def p}.  Observe that we have used that $K(\omega) \le N(\omega)$ for $\mathbb P$-a.e. $\omega \in \Omega$ which follows from~\eqref{eq: def of N}.

Our objective now is to estimate the size of $\left| p(n, \omega, x)-p(n,\omega ,z)\right|$. In order to do it we are going to need several lemmas. Let us consider $\xi_x=(\Id-\Pi^2(\omega))x$, $\eta_x=x-\xi_x$, $\xi_z=(\Id-\Pi^2(\omega))z$ and $\eta_z=z-\xi_z$. Observe that, although all these parameters depend on $\omega$, there will be no confusion in omitting this from the notation itself.

\begin{lemma}
For $\mathbb{P}$-a.e $\omega \in \Omega$ and every $x,z\in X$ and $n\in \mathbb{Z}$,
\begin{equation}\label{eq: est y}
\begin{split}
\left|y(n, \omega, \eta_x)-y(n, \omega, \eta_z)\right|
&\leq N(\omega)e^{\rho|n|}|x-z|.
\end{split}
\end{equation}
\end{lemma}
\begin{proof}[Proof of the lemma]
Since $A(\omega)$ is invertible for $\mathbb{P}$-a.e $\omega \in \Omega$, it follows from \eqref{S1} that $y(n,\omega,\eta)=\cA(\omega,n)\eta$ for $\mathbb{P}$-a.e $\omega \in \Omega$ and every $\eta
\in E^c(\omega)$. Thus, using \eqref{eq: growth of A} and \eqref{eq: def M} we get that
\begin{equation*}
\begin{split}
\left|y(n, \omega, \eta_x)-y(n, \omega, \eta_z)\right|&=\left| \cA(\omega,n)\Pi^2(\omega) (x-z)\right|\\
&\leq M(\omega)Z(\omega) e^{\rho |n|}|x-z|,
\end{split}
\end{equation*}
which together with ~\eqref{eq: def of N} yields the desired result. 
\end{proof}

\begin{lemma}
For $\mathbb{P}$-a.e $\omega \in \Omega$ and every $x,z\in X$ and $n> 0$,
\begin{equation}\label{eq: estimative x future}
\begin{split}
\left|x_1(n, \omega, \xi_x, \eta_x)-x_1(n, \omega, \xi_z, \eta_z)\right| &\leq n N(\omega)\left(e^{\rho}+c\right)^{n}|x-z|.\\
\end{split}
\end{equation}
Similarly, whenever $c>0$ is small enough,
\begin{equation}\label{eq: estimative x past}
\begin{split}
\left|x_1(n, \omega, \xi_x, \eta_x)-x_1(n, \omega, \xi_z, \eta_z)\right| & \leq  |n|N(\omega) \left(\frac{e^{\rho }}{1-ce^\rho}\right)^{|n|+1}|x-z|
\end{split}
\end{equation}
for $\mathbb{P}$-a.e $\omega \in \Omega$ and every $x,z\in X$ and $n<0$.
\end{lemma}

\begin{proof}[Proof of the lemma]
We first introduce some auxiliary notation. For $\mathbb P$-a.e. $\omega \in \Omega$ and $x\in X$, set
\[
|x|_\omega :=\sup_{n\in \Z} (|\cA(\omega, n)x| e^{-\rho |n|} ).
\]
It is easy to verify  that 
\begin{equation}\label{3l}
|x| \le |x|_\omega \le Z(\omega) |x|, 
\end{equation}
and
\begin{equation}\label{4l}
|\cA(\omega, n) x|_{\sigma^n (\omega)} \le e^{\rho | n|} |x|_\omega, 
\end{equation}
for $\mathbb P$-a.e. $\omega \in \Omega$, $n\in \Z$ and $x\in X$. We observe that it follows from~\eqref{l1V2}, \eqref{eq: aux c} and~\eqref{3l} that 
\begin{equation}\label{6l}
|f_\omega(x)-f_\omega(y)|_{\sigma (\omega)} \le c|x-y|, \quad \text{for $\mathbb P$-a.e. $\omega \in \Omega$ and $x, y\in X$.}
\end{equation}
We will now establish~\eqref{eq: estimative x future}. Using~\eqref{eq: growth of A}, \eqref{4l} and~\eqref{6l}, we obtain that for $\mathbb P$-a.e. $\omega \in \Omega$ and $n\ge 0$, 
\[
\begin{split}
&\left|x_1(n+1, \omega, \xi_x, \eta_x)-x_1(n+1, \omega, \xi_z, \eta_z)\right|_{\sigma^{n+1} (\omega)}\\
& \leq \left|A(\sigma^n(\omega))\left( x_1(n, \omega, \xi_x, \eta_x)-x_1(n, \omega, \xi_z, \eta_z)\right) \right|_{\sigma^{n+1} (\omega)}\\
&+\left|f_{\sigma^n(\omega)}(x_1(n, \omega, \xi_x, \eta_x)+y(n, \omega,\eta_x))-f_{\sigma^n(\omega)}(x_1(n, \omega, \xi_z, \eta_z)+y(n, \omega,\eta_z)) \right|_{\sigma^{n+1} (\omega)}\\
&\leq e^\rho \left |x_1(n, \omega, \xi_x, \eta_x)-x_1(n, \omega, \xi_z, \eta_z) \right |_{\sigma^n (\omega)} \\
&\phantom{\leq}+c\left |x_1(n, \omega, \xi_x, \eta_x)+y(n, \omega,\eta_x))-x_1(n, \omega, \xi_z, \eta_z)-y(n, \omega,\eta_z) \right | \\
&\leq e^\rho \left |x_1(n, \omega, \xi_x, \eta_x)-x_1(n, \omega, \xi_z, \eta_z) \right |_{\sigma^n (\omega)} \\
&\phantom{\leq}+c\left |x_1(n, \omega, \xi_x, \eta_x)-x_1(n, \omega, \xi_z, \eta_z)\right |_{\sigma^n (\omega)}+c \left | y(n, \omega,\eta_x))-y(n, \omega,\eta_z)  \right | \\
&= (e^\rho+c) \left| x_1(n, \omega, \xi_x, \eta_x)-x_1(n, \omega, \xi_z, \eta_z) \right|_{\sigma^n (\omega)}+c|\cA(\omega,n)\eta_x-\cA(\omega,n)\eta_z|\\
&\leq (e^\rho+c) \left| x_1(n, \omega, \xi_x, \eta_x)-x_1(n, \omega, \xi_z, \eta_z) \right|_{\sigma^n (\omega)}+Z(\omega) ce^{\rho n}|\eta_x-\eta_z|.
\end{split}
\]
Proceeding recursively and using~\eqref{3l}, we conclude that 
\begin{displaymath}
\begin{split}
&\left|x_1(n+1, \omega, \xi_x, \eta_x)-x_1(n+1, \omega, \xi_z, \eta_z)\right| \\
&\leq \left|x_1(n+1, \omega, \xi_x, \eta_x)-x_1(n+1, \omega, \xi_z, \eta_z)\right|_{\sigma^{n+1} (\omega)}\\
&\leq (e^\rho+c)^{n+1} \left| x_1(0, \omega, \xi_x, \eta_x)-x_1(0, \omega, \xi_z, \eta_z) \right|_\omega\\
&\phantom{\leq} +cZ(\omega)|\eta_x-\eta_z|\sum_{j=0}^n(e^\rho+c)^j e^{\rho (n-j)} \\
&\leq (e^\rho+c)^{n+1} \left| \xi_x-\xi_z \right|_\omega+cZ(\omega)|\eta_x-\eta_z|\sum_{j=0}^n(e^\rho+c)^n\\
&\leq (n+1)(e^\rho+c)^{n+1}Z(\omega) \left(\left| \xi_x-\xi_z \right|+|\eta_x-\eta_z|\right)\\
&\leq (n+1)(e^\rho+c)^{n+1} Z(\omega) \left(2K(\omega)+M(\omega)\right)\left|x-z\right|,
\end{split}
\end{displaymath}
which implies \eqref{eq: estimative x future}.

We now prove \eqref{eq: estimative x past}. Given $\omega\in \Omega$ and $\eta \in E^c(\omega)$ let us consider $F_{\omega}^\eta: E^{s,u}(\omega)\to E^{s,u}(\sigma(\omega))$ defined by
\begin{equation}\label{eq: def F}
F_{\omega}^\eta(\xi)=A(\omega)\xi+f_\omega (\xi+\eta).
\end{equation}
It follows from our hypothesis \eqref{eq: A+f invert} that $F_{\omega}^\eta$ is invertible for $\mathbb{P}$-a.e. $\omega \in \Omega$. Moreover, one can easily check that its inverse is given by 
\begin{equation}\label{eq: inverse of F}
\left( F_{\omega}^\eta\right)^{-1}(\xi)=A(\omega)^{-1}\xi-A(\omega)^{-1} \left(f_\omega\left(\left( F_{\omega}^\eta\right)^{-1}(\xi)+\eta\right)\right).
\end{equation}
Consequently, for $\mathbb P$-a.e. $\omega \in \Omega$ and every $\eta, \theta\in E^c(\omega)$ and $\xi ,\zeta \in E^{s,u}(\sigma(\omega))$,  we have that 
\[
\begin{split}
&\left| \left( F_{\omega}^{\eta}\right)^{-1}(\xi)-\left( F_{\omega}^{\theta}\right)^{-1}(\zeta)\right|_{\omega  } \\
&\leq \left| A(\omega)^{-1}\left(\xi-\zeta\right) \right|_{\omega } \\
&\phantom{\leq}+\left| A(\omega)^{-1}\left( f_\omega \left( \left( F_{\omega}^\eta\right)^{-1}(\xi)+\eta\right)-f_\omega\left( \left( F_{\omega}^\theta\right)^{-1}(\zeta)+\theta\right) \right)  \right|_{\omega}\\
&\leq e^\rho |\xi-\zeta|_{\sigma(\omega )}+e^\rho \left| f_\omega \left( \left( F_{\omega}^\eta\right)^{-1}(\xi)+\eta\right)-f_\omega\left( \left( F_{\omega}^\theta\right)^{-1}(\zeta)+\theta\right) \right|_{\sigma(\omega )} \\
&\leq e^\rho |\xi-\zeta|_{\sigma(\omega )} +ce^\rho  \left( \left| \left( F_{\omega}^\eta\right)^{-1}(\xi)-\left( F_{\omega}^\theta\right)^{-1}(\zeta) \right|+\left|\eta -\theta\right| \right)\\
&\leq e^\rho |\xi-\zeta|_{\sigma(\omega )}+ce^\rho  \left( \left| \left( F_{\omega}^\eta\right)^{-1}(\xi)-\left( F_{\omega}^\theta\right)^{-1}(\zeta) \right|_{\omega}+\left|\eta -\theta\right| \right)\\
&\leq e^\rho \left(|\xi-\zeta|_{\sigma(\omega )}+\left|\eta -\theta\right| \right)+ce^\rho\left| \left( F_{\omega}^\eta\right)^{-1}(\xi)-\left( F_{\omega}^\theta\right)^{-1}(\zeta) \right|_{\omega }.
\end{split}
\]
Thus, for $c>0$ sufficiently small we get that
\begin{equation}\label{eq: estimative inverse}
\begin{split}
&\left| \left( F_{\omega}^{\eta}\right)^{-1}(\xi)-\left( F_{\omega}^{\theta}\right)^{-1}(\zeta)\right|_{\omega } \leq \frac{e^\rho}{1-ce^\rho}  \left(|\xi-\zeta|_{\sigma(\omega )}+\left|\eta -\theta\right| \right).
\end{split}
\end{equation}

Now, recalling \eqref{S1} and \eqref{eq: def F} it follows that for every $n>0$
\begin{equation*}
x_1(-n,\omega,\xi_x,\eta_x)=\left( F_{\sigma^{-n}(\omega)}^{\cA(\omega,-n)\eta_x}\right)^{-1}\left( x_1(-(n-1),\omega,\xi_x,\eta_x)\right).
\end{equation*}
By applying \eqref{eq: estimative inverse},  we obtain that
\begin{equation*}
\begin{split}
&\left| x_1(-n,\omega,\xi_x,\eta_x)-x_1(-n,\omega,\xi_z,\eta_z)\right|_{\sigma^{-n}(\omega)} \\
&\leq \left(\frac{e^\rho}{1-ce^\rho} \right)\left|x_1(-(n-1),\omega,\xi_x,\eta_x)-x_1(-(n-1),\omega,\xi_z,\eta_z)\right|_{\sigma^{-(n-1)}(\omega)} \\
&+ \left(\frac{e^\rho}{1-ce^\rho} \right)\left|\cA(\omega,-n)\eta_x-\cA(\omega,-n)\eta_z \right|\\
&\leq \left(\frac{e^\rho}{1-ce^\rho} \right)\left|x_1(-(n-1),\omega,\xi_x,\eta_x)-x_1(-(n-1),\omega,\xi_z,\eta_z)\right|_{\sigma^{-(n-1)}(\omega)} \\
&+Z(\omega) \left(\frac{e^\rho}{1-ce^\rho} \right)e^{\rho n}\left|\eta_x-\eta_z \right|.\\
\end{split}
\end{equation*}
By proceeding recursively,  we conclude that
\begin{equation*}
\begin{split}
&\left| x_1(-n,\omega,\xi_x,\eta_x)-x_1(-n,\omega,\xi_z,\eta_z)\right|\\
&\leq \left| x_1(-n,\omega,\xi_x,\eta_x)-x_1(-n,\omega,\xi_z,\eta_z)\right|_{\sigma^{-n}(\omega)}\\
&\leq \left(\frac{e^\rho}{1-ce^\rho} \right)^{n}\left|x_1(0,\omega,\xi_x,\eta_x)-x_1(0,\omega,\xi_z,\eta_z)\right|_\omega \\
&\phantom{\leq} +Z(\omega) \left| \eta_x-\eta_z \right| \sum_{j=0}^{n-1}\left(\frac{e^\rho}{1-ce^\rho} \right)^{j+1}e^{\rho(n-j)}\\
&\leq \left(\frac{e^\rho}{1-ce^\rho} \right)^{n}\left|\xi_x-\xi_z\right|_\omega + Z(\omega) \left| \eta_x-\eta_z \right| \sum_{j=0}^{n-1}\left(\frac{e^\rho}{1-ce^\rho} \right)^{n+1}\\
&\leq n \left(\frac{e^\rho}{1-ce^\rho} \right)^{n+1} Z(\omega) \left(\left|\xi_x-\xi_z\right| + \left| \eta_x-\eta_z \right| \right)\\
&\leq n Z(\omega) \left(2K(\omega)+M(\omega)\right)\left(\frac{e^\rho}{1-ce^\rho} \right)^{n+1}\left|x-z\right|\\
&\leq n N(\omega)\left(\frac{e^\rho}{1-ce^\rho} \right)^{n+1}\left|x-z\right|,\\
\end{split}
\end{equation*}
which proves our claim \eqref{eq: estimative x past}. The proof of the lemma is completed. 

\end{proof}

In order to simplify notation, let us consider
\begin{displaymath}
\tilde{x}_1(n, \omega, x):=x_1(n, \omega, \xi_x, \eta_x)+y(n, \omega, \eta_x)
\end{displaymath}
and
\begin{displaymath}
\tilde{x}_1(n, \omega, z):=x_1(n, \omega, \xi_z, \eta_z)+y(n, \omega, \eta_z).
\end{displaymath}
Thus, combining \eqref{eq: est y}, \eqref{eq: estimative x future} and \eqref{eq: estimative x past} we get that
\begin{equation}\label{eq: est x tilde}
|\tilde{x}_1(n, \omega, x)-\tilde{x}_1(n, \omega, z)|\leq 2(|n|+1)N(\omega) \left(\frac{e^{\rho }+c}{1-ce^\rho}\right)^{|n|+1}|x-z|
\end{equation}
for $\mathbb{P}$-a.e. $\omega \in \Omega$ and every $x,z\in X$ and $n\in \mathbb{Z}$. Then, using \eqref{eq: def of N2} and \eqref{eq: f is holder} we conclude that
\begin{equation}\label{eq: estimative f}
\begin{split}
&\left|f_{\sigma^{-(n+1)}(\omega)}(\tilde{x}_1(-(n+1), \omega, x)) -f_{\sigma^{-(n+1)}(\omega)}(\tilde{x}_1(-(n+1), \omega, z))\right|\\
&\leq N(\sigma^{-(n+1)}(\omega))d(\sigma^{-(n+1)}(\omega))^\alpha \left|\tilde{x}_1(-(n+1), \omega, x) -\tilde{x}_1(-(n+1), \omega, z)\right|^\alpha\\
&\leq 2(|n|+2)N(\omega) N(\sigma^{-(n+1)}(\omega))d(\sigma^{-(n+1)}(\omega))^\alpha  \left(\frac{e^{\rho }+c}{1-ce^\rho}\right)^{(|n|+2)\alpha}|x-z|^\alpha\\
&\leq 2(|n|+2)N(\omega)^2 d(\sigma^{-(n+1)}(\omega))^\alpha  e^{\varepsilon(|n|+1)}\left(\frac{e^{\rho }+c}{1-ce^\rho}\right)^{(|n|+2)\alpha}|x-z|^\alpha,
\end{split}
\end{equation}
for $\mathbb{P}$-a.e. $\omega \in \Omega$ and every $x,z\in X$ and $n\in \mathbb{Z}$.

Similarly, assuming without loss of generality that $d(\omega)<1$ and using \eqref{l2V2}, \eqref{eq: def of N2}, \eqref{eq: g is holder} and \eqref{eq: Y holder} we get that, for $\mathbb{P}$-a.e. $\omega \in \Omega$ and every $u,v\in X$,
\[
\begin{split}
&\left|g_{\sigma^{-(n+1)}(\omega)}\left(u +h(\sigma^{-(n+1)}(\omega), u)\right) -g_{\sigma^{-(n+1)}(\omega)}\left(v +h(\sigma^{-(n+1)}(\omega), v)\right) \right|\\
&\leq N(\sigma^{-(n+1)}(\omega))d(\sigma^{-(n+1)}(\omega))^\alpha \min  \Big\{ \left|u-v\right| +\left| h(\sigma^{-(n+1)}(\omega), u)- h(\sigma^{-(n+1)}(\omega), v)\right|,\\
&\phantom{=}\Big[\left|u-v\right| +\left| h(\sigma^{-(n+1)}(\omega), u)- h(\sigma^{-(n+1)}(\omega), v)\right|\Big]^\alpha \Big\}\\
& \leq   N(\sigma^{-(n+1)}(\omega))d(\sigma^{-(n+1)}(\omega))^\alpha\min  \Big\{ \left|u-v\right| + \left| u-v\right|^\alpha, \Big[\left|u-v\right|  +\left|  u-v\right|^\alpha\Big]^\alpha \Big\}\\
&\leq N(\sigma^{-(n+1)}(\omega))d(\sigma^{-(n+1)}(\omega))^\alpha \begin{cases}
2^\alpha\left|  u-v\right|^\alpha & \text{if $\left|  u-v\right|>1$  (using the right one)} \\
2\left|  u-v\right|^\alpha & \text{if $\left| u-v\right|\le 1$ (using the left one)}
\end{cases}\\
&\leq 2N(\sigma^{-(n+1)}(\omega))d(\sigma^{-(n+1)}(\omega))^\alpha|u-v|^\alpha\\
&\leq 2N(\omega)d(\sigma^{-(n+1)}(\omega))^\alpha e^{\varepsilon(|n|+1)}|u-v|^\alpha.
\end{split}
\]
Taking $u=\tilde{x}_1(-(n+1), \omega, x)$ and $v=\tilde{x}_1(-(n+1), \omega, z)$ and applying \eqref{eq: est x tilde} we conclude that
\begin{equation}\label{eq: estimative g}
\begin{split}
&\left|g_{\sigma^{-(n+1)}(\omega)}\left(\tilde{x}_1(-(n+1), \omega, x) +h(\sigma^{-(n+1)}(\omega), \tilde{x}_1(-(n+1), \omega, x))\right)\right. \\
&\left. -g_{\sigma^{-(n+1)}(\omega)}\left(\tilde{x}_1(-(n+1), \omega, z) +h(\sigma^{-(n+1)}(\omega), \tilde{x}_1(-(n+1), \omega, z))\right) \right| \\
&\le 4(|n|+2)N(\omega)^2d(\sigma^{-(n+1)} (\omega))^\alpha  e^{\varepsilon(|n|+1)}\left(\frac{e^{\rho }+c}{1-ce^\rho}\right)^{(|n|+2)\alpha}|x-z|^\alpha, 
\end{split}
\end{equation}
for $\mathbb{P}$-a.e. $\omega \in \Omega$ and every $x,z\in X$ and $n\in \mathbb{Z}$.

Thus, recalling the definition of $p(n, \omega, x)$ and $p(n,\omega ,z)$ in \eqref{eq: def p} and using \eqref{eq: estimative f} and \eqref{eq: estimative g},  it follows that  
\begin{equation}\label{eq: estimative p}
\begin{split}
&\left|p(n, \omega, x)-p(n,\omega ,z)\right| \\
&\leq  6(|n|+2)N(\omega)^2d(\sigma^{-(n+1)} (\omega))^\alpha  e^{\varepsilon(|n|+1)}\left(\frac{e^{\rho }+c}{1-ce^\rho}\right)^{(|n|+2)\alpha}|x-z|^\alpha, 
\end{split}
\end{equation}
for $\mathbb{P}$-a.e. $\omega \in \Omega$ and every $x,z\in X$ and $n\in \mathbb{Z}$. Consequently, plugging \eqref{eq: estimative p} into \eqref{eq: first est T}  yields 
\begin{equation}\label{eq: second est T}
\begin{split}
|(\mathcal T h)(\omega, x)-(\mathcal T h)(\omega, z)| 
&\leq 6 N(\omega)^3B'|x-z|^\alpha
\end{split}
\end{equation}
for $\mathbb{P}$-a.e. $\omega \in \Omega$ and every $x,z\in X$ and $n\in \mathbb{Z}$, where
\[
B':=\sum_{n\in \Z}\left((|n|+2)e^{-\lambda |n|} e^{2\varepsilon(|n|+1)} \left(\frac{e^{\rho }+c}{1-ce^\rho}\right)^{(|n|+2)\alpha}d(\sigma^{-(n+1)} (\omega))^\alpha \right).
\]
 Now, using the fact that $\alpha\rho +10\varepsilon -\lambda <0$ we get that if $c$ is sufficiently small then (mind the term $5\varepsilon$)
\begin{displaymath}
B:=\sum_{n\in \Z}\left((|n|+2)e^{-\lambda |n|} e^{5\varepsilon(|n|+1)} \left(\frac{e^{\rho }+c}{1-ce^\rho}\right)^{(|n|+2)\alpha} \right) <+\infty.
\end{displaymath}
Set
\begin{equation}\label{eq: def of c}
d(\omega)=\left(\frac{c}{12BN(\omega)^3}\right)^{1/\alpha}, \quad \omega \in \Omega. 
\end{equation}
It is easy to see (using~\eqref{eq: def of N}) that \eqref{eq: aux c} is satisfied. Furthermore, since $N$ is tempered, we have that $d$ is also tempered.  Moreover, using the first inequality in \eqref{eq: def of N2} it follows that
\begin{equation*}
\begin{split}
d(\sigma^{-(n+1)(\omega)})^\alpha&=\frac{c}{12BN(\sigma^{-(n+1)}(\omega))^3}\\
&\leq \frac{ce^{3\varepsilon (|n|+1)}}{12BN(\omega)^3}\\
&=d(\omega)^\alpha e^{3\varepsilon (|n|+1)}.
\end{split}
\end{equation*}
Inserting this into \eqref{eq: second est T}, we find that 
\begin{equation*}
\begin{split}
&|(\mathcal T h)(\omega, x)-(\mathcal T h)(\omega, z)|\\
&\leq 6 N(\omega)^3d(\omega)^\alpha\sum_{n\in \Z}\left((|n|+2)e^{-\lambda |n|} e^{5\varepsilon(|n|+1)} \left(\frac{e^{\rho }+c}{1-ce^\rho}\right)^{(|n|+2)\alpha} \right)|x-z|^\alpha\\
&=6 N(\omega)^3d(\omega)^\alpha B|x-z|^\alpha\\
&= \frac{c}{2}|x-z|^\alpha\leq |x-z|^\alpha,
\end{split}
\end{equation*}
for $\mathbb{P}$-a.e $\omega \in \Omega$ and every $x,z\in X$. Hence,  $\mathcal{T}(Y^\alpha)\subset Y^\alpha$ as claimed. Thus, since $Y^\alpha$ is a closed subspace of $(Y_\infty, \|\cdot\|'_\infty)$ and $\mathcal{T}:Y_\infty\to Y_\infty$ is a contraction, it follows that the unique fixed point $h$ of $\mathcal{T}$ is in $Y^\alpha$. Therefore, since the conjugacy given by Theorem \ref{theo: hg} is of the form $H_\omega(x)=x+h(\omega,x)$, it follows that 
\begin{displaymath}
\begin{split}
|H_\omega(x)-H_\omega(z)|&=|x+h(\omega,x)-z-h(\omega,z)|\\
&\leq \left(|x-z|^{1-\alpha}+1\right)|x-z|^\alpha
\end{split}
\end{displaymath}
for $\mathbb{P}$-a.e $\omega \in \Omega$ and every $x,z\in X$. Now, given a bounded subset $\tilde{X}\subset X$ there exists a constant $\tilde{T}>1$ depending only on $\tilde{X}$ such that $|x-z|\le \tilde{T}$ for every $x,z\in \tilde{X}$. Consequently, taking $T:=\tilde{T}+1$ we get that
\begin{displaymath}
\begin{split}
|H_\omega(x)-H_\omega(z)|\leq T|x-z|^\alpha
\end{split}
\end{displaymath}
for $\mathbb{P}$-a.e $\omega \in \Omega$ and every $x,z\in \tilde{X}$.  The fact that $H_\omega^{-1}$ satisfies a similar property follows by changing the roles of $f_\omega$ and $g_\omega$ in the previous argument. The proof of the theorem is complete.
\end{proof}

\begin{remark}\label{rem: non ergodic}
Theorems \ref{theo: hg} and \ref{HL} were stated and proved in the case when the base dynamics $(\Omega, \sigma ,\mathbb{P})$ is ergodic. Nevertheless, similar versions with generalized hypothesis can be obtained in the non-ergodic setting simply by combining our results with the Ergodic Decomposition Theorem: if $\mathbb{P}$ is non-ergodic then we can decompose it into ergodic components, each of which ``living" in disjoint subsets of $\Omega$, and apply our results to each of these components. Then, by ``gluing" together each of the respective maps we get the desired result. The main differences in the statements of the results in the non-ergodic case with respect to the ergodic one appear in the constant $c$ in Theorem \ref{theo: hg} and in the H\"older exponent in Theorem \ref{HL}: both will become $\sigma$-invariant maps $c,\alpha:\Omega \to (0,1)$ since these quantities will depend on the ergodic component of $\mathbb{P}$. For instance, instead of getting a constant H\"older exponent that holds for $\mathbb{P}$-a.e $\omega \in \Omega$, we will get a $\sigma$-invariant map $\alpha:\Omega \to (0,1)$ such that each $H_\omega$ and $H_\omega^{-1}$ is $\alpha(\omega)$-H\"older continuous for $\mathbb{P}$-a.e $\omega \in \Omega$.
As already mentioned, in the non-ergodic setting our hypothesis can be relaxed accordingly. For instance, in conditions \eqref{d1}, \eqref{d2} and \eqref{eq: growth of A} the constants $\lambda$ and $\rho$ can be replaced by $\sigma$-invariant maps $\lambda,\rho:\Omega \to (0,+\infty)$. 
\end{remark}

\section{Acknowledgements}
We would like to thank the referee for useful comments that helped us improve our paper.
 L.B. was partially supported by a CNPq-Brazil PQ fellowship under Grant No. 306484/2018-8. D.D. was supported in part by Croatian Science Foundation under the project
IP-2019-04-1239 and by the University of Rijeka under the projects uniri-prirod-18-9 and uniri-pr-prirod-19-16.

\end{document}